\theoremstyle{plain}
\newtheorem{thm}{Theorem}
\newtheorem{conj}[thm]{Conjecture}
\theoremstyle{definition}
\numberwithin{thm}{section}
\DeclareMathOperator{\id}{id}
\def\Z{{\mathbb Z}}
\begin{document}
\bibliographystyle{hplain}

\author{P. Christopher Staecker}
\title{Some enumerations of binary digital images}

\maketitle

\begin{abstract}
The topology of digital images has been studied much in recent years, but no attempt has been made to exhaustively catalog the structure of binary images of small numbers of points. We produce enumerations of several classes of digital images up to isomorphism and decide which among them are  homotopy equivalent to one another. Noting some patterns in the results, we make some conjectures about digital images which are irreducible but not rigid.
\end{abstract}

\tikzset{vertex/.style={circle,draw,fill,inner sep=0pt,minimum size=1mm}}

\section{Introduction}
A \emph{[binary] digital image}, or simply \emph{image}, is a pair $(X,\kappa)$ where $\kappa$ is a symmetric antireflexive binary relation on $X$, called the \emph{adjacency relation}. Typically the set $X$ is taken to be a finite subset of $\Z^n$ for some $n\ge 2$, and the relation $\kappa$ encodes some common notion of ``adjacency'' in $\Z^n$. Notable adjacency relations are the ``4-adjacency'' and ``8-adjacency'' in $\Z^2$. Two points of $\Z^2$ are 4-adjacent when their coordinates match in one position and differ by 1 in the other. Two different points are 8-adjacent when their coordinates differ by at most 1 in both coordinates. (The numbers 4 and 8 refer to the total number of points in $\Z^2$ which are adjacent to a given point using either relation.)

The topological theory of binary digital images has been studied by many authors for the past 30 years. The work has mostly been characterized by making discretized versions of definitions from classical topology, and seeing where the definitions lead. Continuity, connectivity, homotopy, and homology and fundamental groups have all been defined and studied by various authors. 

Though the definitions are motivated directly by the classical theory of topological spaces, digital images are fundamentally discrete in nature, and are usually taken to be finite. As in other discrete fields, it would be useful to have exhaustive catalogs of these images (at least the interesting ones) for small numbers of points, and this is the goal of the current paper.

We have produced the enumerations mostly by brute force computer search, and have carried the computations as far as can be done in reasonable time on an ordinary computer. Most of the computations and enumerations could be improved with heavier machinery, though all algorithms carried out have exponential complexity in the number of points.

We begin with a quick survey of some standard definitions which are repeated throughout the literature, see for example \cite{boxe05}. A digital image technically is a pair $(X,\kappa)$, but we will omit the $\kappa$ in almost all cases because the adjacency relation will be implicit. A \emph{digital interval} $[a,b]_\Z$ for $a,b\in \Z$ with $a\le b$ is the set $\{a,a+1,\dots b\}$, equipped with the natural ``2-adjacency'' wherein $x$ is adjacent only to $x+1$ and $x-1$. 

A function $f:X\to Y$ is \emph{continuous} if and only if whenever $a,b\in X$ are adjacent, the points $f(a),f(b) \in Y$ are equal or adjacent. Such a function is an \emph{isomorphism} when it has a continuous inverse. Equivalently, $f$ is an isomorphism when it gives a graph isomorphism on the \emph{adjacency graph of $X$}, the simple graph whose vertices are the points of $X$ and edges are given by the adjacency relation. 

For two continuous functions $f,g:X\to Y$, a \emph{homotopy} from $f$ to $g$ is a continuous function $H:X\times [0,k]_\Z \to Y$ where $k \in \Z$ and $H(x,0) = f(x)$ and $H(x,k) = g(x)$ for all $x$. For $H$, ``continuity'' means that $H(x,t)$ is continuous in $x$ for fixed $t$ as a map $X \to Y$, and also continuous in $t$ for fixed $x$ as a map $[0,k]_\Z \to Y$. If such a $H$ exists we say $f$ is \emph{homotopic} to $g$, and we write $f\simeq g$. If there is a homotopy from $f$ to $g$ having $k=1$, then we say $f$ and $g$ are \emph{homotopic in one step}. 

Two spaces $X$ and $Y$ are \emph{homotopy equivalent} when there are continuous maps $f:X \to Y$ and $g:Y\to X$ whose compositions $f\circ g$ and $g\circ f$ are each homotopic to identity maps. 

The paper \cite{hmps15} investigates homotopy equivalence in detail. A finite image $X$ is called \emph{reducible} if it is homotopy equivalent to an image of fewer points. Otherwise it is called \emph{irreducible}. Lemma 2.9 of \cite{hmps15} shows that an image $X$ is reducible if and only if the identity map on $X$ is homotopic to a nonsurjection in one step.

We say that an image $X$ is \emph{pointed reducible} if the identity map is homotopic in one step to a nonsurjection which fixes at least one point. Otherwise we say it is \emph{pointed irreducible}. 

Some irreducible images have the stronger property that no map at all is homotopic to the identity except for the identity itself. In \cite{hmps15} such spaces are called \emph{rigid}. Thus we have the following implications:
\[ X \text{ is rigid } \implies X \text{ is irreducible } \implies X \text{ is pointed irreducible } \]
None of the converses for the above implications are true. In \cite{hmps15} an example is given of an image which is pointed irreducible but not irreducible, and another example is given of an image which seems to be irreducible but is not rigid. Our enumerations will show that this latter example is in fact irreducible.

Since [pointed] irreducibility and rigidity of a disconnected image are equivalent to the same properties of each of its connected components, we consider only connected images (i.e. images whose adjacency graphs are connected).

The structure of this paper is as follows: In Section \ref{abstractsection} we describe the enumeration process for all ``abstract'' connected digital images (simple graphs, not necessarily realizable as subsets of $\Z^n$ with some natural geometric adjacency relation) up to isomorphism. In Section \ref{4adjsection} we describe an enumeration up to isomorphism of all connected images in $\Z^2$ with 4-adjacency having at most 12 points. In Section \ref{8adjsection} we describe an enumeration to isomorphism of all connected images in $\Z^2$ with 8-adjacency having at most 9 points. For all these enumerations, we determine which images are rigid, which are irreducible, and which are pointed irreducible. 

Throughout, we have used the open-source free software project Sage for the computations. Source code is available at the author's website for inspection and experimentation. Also available are graphics and machine-readable data files describing all digital images enumerated.\footnote{\url{http://faculty.fairfield.edu/cstaecker}}

\section{Abstract connected digital images}\label{abstractsection}
In this section we consider all connected digital images $(X,\kappa)$, without any regard for whether they naturally embed as subsets of $\Z^n$ using any standard adjacency relation. Up to isomorphism, such images are characterized by their adjacency graphs, which are connected simple graphs. 

Enumerations of all connected simple graphs are well-known, and this suffices to enumerate all connected abstract digital images up to isomorphism. To decide rigidity, irreducibility, and pointed irreducibility for these images it suffices to consider all functions homotopic to the identity in one step, and verify which among these are continuous and surjective. In the worst case there are $d^n-1$ functions (continuous or not) different from the identity in one step, where $d$ is the maximum degree of any vertex, and $n$ is the number of vertices. For small enough $n$ it is feasible to check each of these functions for continuity and surjectivity. We obtain the following counts:

\begin{center}
\begin{tabular}{r|rrrrrrrrr}
$n$  &	1  &	2  &	3  &	4  &	5  &	6  &	7 
&	8  &	9 \\
\hline
Images & 1& 1 & 2 & 6 & 21 & 112 & 853 & 11117 & 261080 \\
Pointed irreducible images &	1 &	0 &	0 &	0 &	1
&	2 &	9 &	68 &	1110 \\
Irreducible images &	1 &	0 &	0 &	0 &	1 &	1
&	3 &	28 &	547 \\
Rigid images &	1 &	0 &	0 &	0 &	0 &	0 &	2
&	26 &	544 \\
\end{tabular}
\end{center}

From the last two rows above we see that there is at most one irreducible image which is not rigid for $n< 7$. This image for each $n$ is the cycle of $n$ points, which we denote $C_n$. Boxer shows in \cite{boxe05} that $C_n$ is irreducible for $n\not\in\{2,3,4\}$, and it is clearly nonrigid because the rotation maps are continuous and homotopic to the identity. 

For $n=8$ we have one nonrigid irreducible image other than $C_8$, and this is the example image given in \cite[Example 3.14]{hmps15}. For $n=9$ there are 2 nonrigid irreducible images other than $C_9$. These three nonrigid irreducible images are shown in Figure \ref{nonrigidirr}, along with their graph6 string identifiers.\footnote{The graph6 format is an ASCII string representation for simple graphs developed as part of the nauty package \cite{nauty}. In most major computer algebra systems, graphs can be input simply by their graph6 strings.} In all cases the adjacency graphs are nonplanar. Planarity does not seem obviously related to rigidity, but to motivate a search for a counterexample we conjecture:
\begin{conj}\label{planar}
Let $X$ be a non-rigid irreducible digital image which is not isomorphic to $C_n$. Then the adjacency graph of $X$ is not planar.
\end{conj}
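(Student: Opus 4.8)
Since the statement is posed as a conjecture, the following is a strategy I would pursue rather than a finished proof. The plan is to argue by contraposition: assuming the adjacency graph $G$ of $X$ is planar and $X\not\cong C_n$, I would show that $X$ is rigid or reducible.

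The first step is to extract clean combinatorial consequences of the hypotheses. A map $f\colon X\to X$ is one-step homotopic to $\id$ exactly when $f$ is continuous and $f(v)\in N[v]$ for every vertex $v$; call such an $f$ a \emph{near-identity} map. If $X$ is non-rigid, then following a homotopy chain from $\id$ to a non-identity map and taking its first nontrivial step produces a non-identity near-identity map $\sigma$. If moreover $X$ is irreducible, then by Lemma 2.9 of \cite{hmps15} no near-identity map is a nonsurjection, so $\sigma$ is surjective, hence bijective, hence (being a continuous bijection of a finite adjacency graph) a graph automorphism. Thus a non-rigid irreducible image carries a non-identity automorphism $\sigma$ displacing each vertex by at most one edge. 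Irreducibility also rules out the simplest folds: if some edge $vw$ satisfied $N(v)\subseteq N[w]$, the retraction sending $v\mapsto w$ and fixing all else would be a near-identity nonsurjection. In particular $G$ has minimum degree at least $2$ (a leaf is always foldable onto its neighbor), and the two neighbors of any degree-$2$ vertex are nonadjacent. Call a graph with no foldable vertex \emph{stiff}.

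This reduces the conjecture to a statement in structural graph theory: \emph{a connected, planar, stiff graph admitting a non-identity near-identity automorphism must be $2$-regular}, since a connected $2$-regular graph is a cycle $C_n$. Stiffness already gives minimum degree $\ge 2$, so the whole problem is to forbid a vertex of degree $\ge 3$. Here I would analyze the orbits of $\sigma$: each nontrivial orbit $v,\sigma v,\sigma^2 v,\dots$ is a closed walk through consecutively adjacent vertices, hence an embedded cycle in $G$, and $\sigma$ permutes these orbit-cycles while respecting adjacencies. Stiffness tightly constrains how an orbit-cycle through a high-degree vertex can attach to its other neighbors, while planarity contributes the sparsity bound $|E|\le 3|V|-6$ (forcing many low-degree vertices) together with Kuratowski's theorem. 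The aim of this step is to show that any configuration with a degree-$\ge 3$ vertex forces a $K_5$ or $K_{3,3}$ subdivision, contradicting planarity.

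To dispatch the highly connected case cleanly I would invoke the rigidity of planar embeddings: a $3$-connected planar graph has an essentially unique embedding (Whitney) and, by Mani's theorem, realizes its automorphism group by isometries of the sphere. A near-identity automorphism would then be a rotation or reflection moving each vertex by at most one edge, and since a nontrivial spherical rotation fixes only two antipodal points, one should be able to show that no stiff $3$-connected planar graph admits such a symmetry. The residual difficulty, and what I expect to be the main obstacle, is the intermediate regime of $1$- and $2$-connected graphs that are not cycles: here neither the sparsity bound alone nor the spherical-symmetry argument is decisive, and one must rule out planar ``gadgets'' that are locally stiff yet carry a cyclic symmetry. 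Controlling these globally---always producing either a forbidden minor or an unexpected non-surjective homotopy---is the crux, and is presumably why the statement remains only a conjecture; the computational evidence that every small non-cycle example is nonplanar is the main reason to believe the obstacle can be overcome.
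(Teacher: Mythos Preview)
There is nothing in the paper to compare your proposal against: the statement is labeled and treated as a conjecture, and the paper offers no proof or even a proof sketch. The only supporting content is the computational observation that the three non-cycle nonrigid irreducible images on at most nine points happen to have nonplanar adjacency graphs, together with the remark that ``planarity does not seem obviously related to rigidity.'' Indeed, the author frames the conjecture explicitly ``to motivate a search for a counterexample,'' so the paper is, if anything, agnostic about its truth.

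Your preliminary reductions are sound and worth recording: from non-rigidity plus irreducibility you correctly extract a non-identity graph automorphism $\sigma$ with $\sigma(v)\in N[v]$ for all $v$ (a continuous bijection of a finite image is an isomorphism by an edge count), and irreducibility forbids any fold $v\mapsto w$ with $N(v)\subseteq N[w]$, giving minimum degree at least $2$ and nonadjacent neighbors at each degree-$2$ vertex. One small correction: a nontrivial $\sigma$-orbit of length $2$ is just an edge, not a cycle, so your description of orbits as embedded cycles needs the orbit length to be at least $3$. Beyond that, you are candid that the $3$-connected case via Whitney/Mani is only heuristic and that the $1$- and $2$-connected cases are the real obstacle; that honesty is appropriate, since the paper gives no indication that these obstacles can be overcome, and your outline does not supply the missing idea either. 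In short, your write-up is a reasonable research plan, but it neither matches nor diverges from a proof in the paper, because no such proof exists.
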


\begin{figure}
\begin{center}
\begin{tabular}{ccc}
\begin{tikzpicture}
\foreach \x in {0,1,2,3,4,5,6,7} { \node[vertex](v\x) at (360*\x/8:1.2) {}; };
\draw (0:1.2) \foreach \x in {1,2,3,4,5,6,7,8} { -- (360*\x/8 : 1.2) };
\foreach \x in {0,1,2,3} {\draw (360*\x/8:1.2) -- (360*\x/8 + 180:1.2);}
\end{tikzpicture} 
&
\begin{tikzpicture}
\foreach \x in {0,1,2,3,4,5,6,7,8} { \node[vertex](v\x) at (360*\x/9:1.2) {}; };
\draw (0:1.2) \foreach \x in {1,2,3,4,5,6,7,8,9} { -- (360*\x/9 : 1.2) };
\foreach \x in {0,1,2,3,4,5,6,7,8} {\draw (360*\x/9:1.2) -- (360*\x/9 + 80:1.2);}
\end{tikzpicture}
\qquad
&
\begin{tikzpicture}
\foreach \x in {0,1,2,3} { \node[vertex](a\x) at (360*\x/4 + 45:.6) {}; };
\foreach \x in {0,1,2,3} { \node[vertex](b\x) at (360*\x/4 + 45:1.2) {}; };
\draw (a1) -- (a2);
\draw (a3) -- (a0);
\draw (a1) to[out=-25, in=115] (a3) ;
\draw (a0) to[out=-155, in=65] (a2);
\draw (b3) -- (b0) -- (b1) -- (b2) -- (b3);
\foreach \x in {0,1,2,3} {\draw (a\x) -- (b\x); }
\node[vertex](c) at (0,0) {};
\draw (c) \foreach \x in {0,1,2,3} { -- (360*\x/4 + 45:.6)} ;
\end{tikzpicture}
\\
GrDKPK & HhciKeX & HzSW[Mb
\end{tabular}
\end{center}
\caption{The three nonrigid irreducible images on 9 points or fewer, with their graph6 strings.\label{nonrigidirr} In the first image (which has no vertex in the center) the ``antipodal'' map is homotopic to the identity. In the second a rotation is homotopic to the identity. In the third the vertical reflection, fixing the center point, is homotopic to the identity.}
\end{figure}
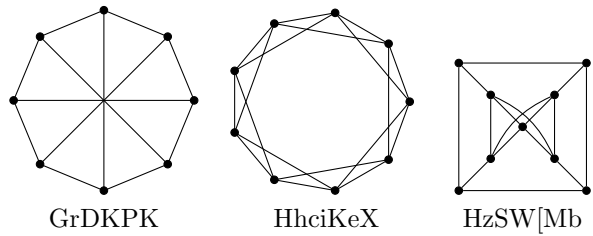

Among the irreducible images enumerated above, it is natural to ask which are homotopy equivalent to one another. Theorem 2.10 of \cite{hmps15} shows that irreducible images are homotopy equivalent if and only if they are isomorphic. Thus the enumerations above suffice to determine the homotopy equivalence classes of all images listed.

\section{Connected images in $\Z^2$ with 4-adjacency}\label{4adjsection}
In this section we consider finite digital images which can be expressed as subsets of $\Z^2$ with the 4-adjacency relation. That is, two points are adjacent when their coordinates match in one position, and differ by 1 in the other. Informally, points in $\Z^2$ are 4-adjacent when they are ``next to'' each other, not allowing diagonals.

Finite connected digital images with 4-adjacency naturally correspond to \emph{polyominoes}, well studied combinatorial objects consisting of finitely many square tiles in a connected grid arrangement. See \cite{golo96} for a standard reference. Distinct polyominoes can represent isomorphic images: for example there are, up to rotation and translation, two distinct polyominoes of 3 tiles (one straight and one bent), but they are isomorphic as digital images with 4-adjacency. 

Our enumeration of all digital images with 4-adjacency is based on the most common inductive algorithm for enumerating polyominoes. To generate all $n$-tile polyominoes, first we generate all $(n-1)$-tile polyominoes, and then test all ways in which one tile can be added to these. No simple formula is known for the number of polyominoes of $n$ tiles, but this number is known to grow exponentially in $n$.

We enumerate digital images with 4-adjacency by constructing polyominoes, and then separating these into isomorphism classes with respect to 4-adjacency by checking if the adjacency graphs are isomorphic. The polyomino enumeration has exponential complexity in $n$ (it must, since the number of polyominoes grows exponentially in $n$), and the graph isomorphism problem is also famously nonpolynomial, using the best current algorithms.

When we measure irreducibility of these images, there is some subtlety. A given image $X$ may be homotopy equivalent to some other image $Y$, but we would also like to know whether or not $Y$ is realizable as a subset of $\Z^2$ with 4-adjacency. In fact it always is, as the following simple theorem shows:
\begin{thm}
Let $(X,\kappa)$ be reducible. Then there is a proper subset $Y\subset X$ such that $(X,\kappa)$ is homotopy equivalent to $(Y,\kappa)$.
\end{thm}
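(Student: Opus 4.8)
The plan is to extract the desired subimage directly from the cited characterization of reducibility. By Lemma 2.9 of \cite{hmps15}, since $(X,\kappa)$ is reducible, the identity $\id_X$ is homotopic in one step to some nonsurjection $f:X\to X$; I would fix a homotopy $H:X\times[0,1]_\Z\to X$ witnessing this, so that $H(x,0)=x$ and $H(x,1)=f(x)$ for all $x$. I would then take $Y=f(X)$, equipped with the adjacency relation $\kappa$ restricted from $X$. Since $f$ is not surjective, $Y$ is a proper subset of $X$, and it remains only to show that $(X,\kappa)$ and $(Y,\kappa)$ are homotopy equivalent.

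To do so, I would use the inclusion $i:Y\to X$ together with the corestriction $\bar f:X\to Y$ of $f$ (the same underlying map as $f$, but with codomain recorded as $Y$). Both are continuous: the inclusion preserves adjacency because $Y$ carries the restricted relation, and $\bar f$ inherits continuity from $f$, since adjacency within $Y$ is exactly adjacency in $X$ between points of $Y$. One of the two required homotopies is then immediate, because $i\circ \bar f=f:X\to X$, which is homotopic to $\id_X$ via $H$.

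The remaining point is to show that $\bar f\circ i=f|_Y:Y\to Y$ is homotopic to $\id_Y$, and here I would simply restrict $H$ to $Y\times[0,1]_\Z$. The main obstacle---indeed the only delicate step---is verifying that this restricted homotopy stays inside $Y$ rather than merely inside $X$, for otherwise it would not be a homotopy of maps $Y\to Y$. This is precisely where the \emph{one-step} hypothesis is essential: the time parameter ranges only over $[0,1]_\Z=\{0,1\}$, so the homotopy consists of just the two slices $t=0$ and $t=1$, and both lie in $Y$, since $H(y,0)=y\in Y$ and $H(y,1)=f(y)\in f(X)=Y$. Continuity of the restriction as a map into $Y$ then follows from continuity of $H$ into $X$ by the same codomain-restriction observation used above. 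Thus $H|_{Y\times[0,1]_\Z}$ is a homotopy from $\id_Y$ to $f|_Y$ lying entirely in $Y$, which completes the equivalence. I would remark that for a homotopy of length $k\ge 2$ the intermediate slices could well escape $Y$, so it is exactly the one-step structure guaranteed by reducibility that makes this clean choice of $Y$ succeed.
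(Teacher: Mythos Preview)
Your proof is correct. Both your argument and the paper's follow the same template: produce a continuous nonsurjection $X\to X$ homotopic to the identity, set $Y$ equal to its image, and exhibit the homotopy equivalence via the corestriction $X\to Y$ and the inclusion $Y\to X$. The difference lies in how the nonsurjection is obtained. The paper works directly from the definition of reducibility, taking $h=g\circ f$ where $f:X\to Z$ and $g:Z\to X$ witness a homotopy equivalence with a smaller image $Z$; you instead invoke Lemma~2.9 of \cite{hmps15} to obtain a nonsurjection $f$ that is already homotopic to $\id_X$ \emph{in one step}. The payoff of your route is that the verification of $\bar f\circ i\simeq \id_Y$ becomes fully explicit: the one-step homotopy restricted to $Y$ manifestly stays inside $Y$ because there are no intermediate time-slices to worry about. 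The paper, by contrast, simply asserts that $h\circ i$ is homotopic to $\id_Y$ ``since $h$ is homotopic to the identity,'' leaving that codomain check implicit (and deferring details to Lemma~2.8 of \cite{hmps15}). So your choice of lemma buys a self-contained treatment of exactly the point you flagged as delicate.
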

\begin{proof}
The important content of the theorem is that the same adjacency relation is being used for both $X$ and $Y$ in the conclusion. Our proof is essentially the same as the proof of Lemma 2.8 of \cite{hmps15}. We provide the full argument for the sake of completeness.

Since $X$ is reducible, say $(X,\kappa)$ is homotopy equivalent to $(Z,\kappa')$, where $Z$ has fewer points than $X$. Then there are continuous maps $f:X \to Z$ and $g:Z \to X$ such that in particular $g\circ f \simeq \id_X$. But since $Z$ has fewer points than $X$ the map $g\circ f$ cannot be surjective. Thus $Y = g(f(X))$ is a proper subset of $X$. 

Now it is easy to show that $(X,\kappa)$ is homotopy equivalent to $(Y,\kappa)$. Let $h = g\circ f: X \to Y$, and let $i:Y \to X$ be the inclusion. Then we have $h \circ i: Y \to Y$ and $i \circ h: X \to X$. Both of these maps are continuous, and both are homotopic to identity maps since $h$ is homotopic to the identity. Thus $X$ is homotopy equivalent to $Y$.
\end{proof}

In particular the above shows that a digital image in $\Z^2$ with 4-adjacency is reducible if and only if it reduces to another digital image in $\Z^2$ with 4-adjacency. Thus, among the set of images with 4-adjacency, there is no distinction between ``reducible'' and ``reducible to an image with 4-adjacency''.

Below are the computational results of the enumerations. In all cases for these $n$, an image is pointed irreducible if and only if it is irreducible.

\begin{center}
\begin{tabular}{r|rrrrrrrrrrrr}
$n$  &	1  &	2  &	3  &	4  &	5  &	6  &	7 
&	8  &	9  &	10  &	11  &	12 \\
\hline
Images with 4-adjacency &	1 &	1 &	1 &	3 &	4 &	10
&	19 &	51 &	112 &	300 &	746 &	2042 \\
Pointed irreducible &	1 &	0 &	0 &	0 &	0 &	0
&	0 &	1 &	0 &	1 &	0 &	1 \\
Irreducible &	1 &	0 &	0 &	0 &	0 &	0 &	0
&	1 &	0 &	1 &	0 &	1 \\
Rigid &	1 &	0 &	0 &	0 &	0 &	0 &	0 &	0
&	0 &	0 &	0 &	0 \\
\end{tabular}
\end{center}

It seems that the equality of the middle two rows might hold in general, but this is not the case. Arguments similar to those in Section 5 of \cite{hmps15} show that the 13-point image in Figure \ref{4adjptfig} (a) is reducible but pointed irreducible.
\begin{figure}
\begin{center}
\begin{tabular}{cc}
\begin{tikzpicture}[scale=.2]
	\filldraw[fill=gray, xshift=2cm,yshift=2cm]
		(45:1.2) \foreach \x in {135,225,315,45} { -- (\x:1.2) };
	\filldraw[fill=gray, xshift=2cm,yshift=4cm]
		(45:1.2) \foreach \x in {135,225,315,45} { -- (\x:1.2) };
	\filldraw[fill=gray, xshift=2cm,yshift=6cm]
		(45:1.2) \foreach \x in {135,225,315,45} { -- (\x:1.2) };
	\filldraw[fill=gray, xshift=2cm,yshift=8cm]
		(45:1.2) \foreach \x in {135,225,315,45} { -- (\x:1.2) };
	\filldraw[fill=gray, xshift=4cm,yshift=2cm]
		(45:1.2) \foreach \x in {135,225,315,45} { -- (\x:1.2) };
	\filldraw[fill=gray, xshift=4cm,yshift=8cm]
		(45:1.2) \foreach \x in {135,225,315,45} { -- (\x:1.2) };
	\filldraw[fill=gray, xshift=6cm,yshift=2cm]
		(45:1.2) \foreach \x in {135,225,315,45} { -- (\x:1.2) };
	\filldraw[fill=gray, xshift=6cm,yshift=6cm]
		(45:1.2) \foreach \x in {135,225,315,45} { -- (\x:1.2) };
	\filldraw[fill=gray, xshift=6cm,yshift=8cm]
		(45:1.2) \foreach \x in {135,225,315,45} { -- (\x:1.2) };
	\filldraw[fill=gray, xshift=8cm,yshift=2cm]
		(45:1.2) \foreach \x in {135,225,315,45} { -- (\x:1.2) };
	\filldraw[fill=gray, xshift=8cm,yshift=4cm]
		(45:1.2) \foreach \x in {135,225,315,45} { -- (\x:1.2) };
	\filldraw[fill=gray, xshift=8cm,yshift=6cm]
		(45:1.2) \foreach \x in {135,225,315,45} { -- (\x:1.2) };
	\filldraw[fill=gray, xshift=8cm,yshift=8cm]
		(45:1.2) \foreach \x in {135,225,315,45} { -- (\x:1.2) };
\end{tikzpicture}
&
\begin{tikzpicture}[scale=.2]
	\filldraw[fill=gray, xshift=2cm,yshift=4cm]
		(45:1.2) \foreach \x in {135,225,315,45} { -- (\x:1.2) };
	\filldraw[fill=gray, xshift=2cm,yshift=6cm]
		(45:1.2) \foreach \x in {135,225,315,45} { -- (\x:1.2) };
	\filldraw[fill=gray, xshift=2cm,yshift=8cm]
		(45:1.2) \foreach \x in {135,225,315,45} { -- (\x:1.2) };
	\filldraw[fill=gray, xshift=4cm,yshift=2cm]
		(45:1.2) \foreach \x in {135,225,315,45} { -- (\x:1.2) };
	\filldraw[fill=gray, xshift=4cm,yshift=10cm]
		(45:1.2) \foreach \x in {135,225,315,45} { -- (\x:1.2) };
	\filldraw[fill=gray, xshift=6cm,yshift=2cm]
		(45:1.2) \foreach \x in {135,225,315,45} { -- (\x:1.2) };
	\filldraw[fill=gray, xshift=6cm,yshift=6cm]
		(45:1.2) \foreach \x in {135,225,315,45} { -- (\x:1.2) };
	\filldraw[fill=gray, xshift=6cm,yshift=10cm]
		(45:1.2) \foreach \x in {135,225,315,45} { -- (\x:1.2) };
	\filldraw[fill=gray, xshift=8cm,yshift=4cm]
		(45:1.2) \foreach \x in {135,225,315,45} { -- (\x:1.2) };
	\filldraw[fill=gray, xshift=8cm,yshift=8cm]
		(45:1.2) \foreach \x in {135,225,315,45} { -- (\x:1.2) };
	\filldraw[fill=gray, xshift=10cm,yshift=6cm]
		(45:1.2) \foreach \x in {135,225,315,45} { -- (\x:1.2) };
\end{tikzpicture}
\\ (a) & (b)
\end{tabular}
\end{center}
\caption{(a): An image in $\Z^2$ with $4$-adjacency which is reducible but pointed irreducible, and (b) a similar image in $\Z^2$ with $8$-adjacency.\label{4adjptfig}}
\end{figure}
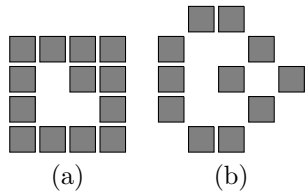

For even $n$ not equal to 2 or 6, there are images in $\Z^2$ with 4-adjacency which are isomorphic to $C_n$. When $n>4$, such images will be irreducible and not rigid, and so the last two rows should differ by at least 1 for those $n$. There seem to be no other nonrigid irreducible images with 4-adjacency. Indeed, if Conjecture \ref{planar} holds then no other such images can exist, since all images in $\Z^2$ with 4-adjacency have planar adjacency graph. Even if Conjecture \ref{planar} fails, however, it still seems reasonable to conjecture as follows:
\begin{conj}\label{4adjconj}
Let $X$ be a digital image in $\Z^2$ with 4-adjacency. Then $X$ is non-rigid and irreducible if and only if $X$ is isomorphic to $C_n$ for some $n>4$.
\end{conj}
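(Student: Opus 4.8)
The plan is to prove both directions, with the forward implication being the substantial one.

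For the easy direction ($\Leftarrow$): if $X \cong C_n$ with $n > 4$, then $X$ is irreducible by Boxer's result (that $C_n$ is irreducible for $n \notin \{2,3,4\}$) in \cite{boxe05}, and non-rigid because the rotation automorphisms are continuous and homotopic to the identity. Since both properties are isomorphism invariants, it is irrelevant that only even $C_n$ (and not $C_6$) are realizable with 4-adjacency; every realizable such $X$ is non-rigid and irreducible.

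For ($\Rightarrow$) I would first extract a usable structural object. Since $X$ is non-rigid, choose a homotopy $H$ from $\id_X$ to some map different from $\id_X$, and let $j$ be least with $f := H(\cdot,j) \neq \id_X$; then $H(\cdot,j-1) = \id_X$, so $f$ is homotopic to $\id_X$ in one step and $f \neq \id_X$. Irreducibility together with Lemma 2.9 of \cite{hmps15} forces $f$ to be surjective, hence bijective on the finite vertex set, hence a graph automorphism (a bijective homomorphism of a finite graph injects, so bijects, the edge set). Now invoke the bipartite $2$-coloring of the grid by the parity of $x+y$: for each edge $u \adj w$ the automorphism $f$ cannot collapse it, so $f(u) \adj f(w)$, and tracking colors shows the indicator $\phi(x) := [f(x) \neq x]$ satisfies $\phi(u) = \phi(w)$ on every edge and is therefore constant on the connected graph $X$. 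Since $f \neq \id_X$ rules out $\phi \equiv 0$, we get $\phi \equiv 1$: the automorphism $f$ moves \emph{every} vertex to a grid-neighbour, i.e. $\|f(x)-x\|_1 = 1$ for all $x$.

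I would then analyze this unit-displacement automorphism. As a permutation of $X$ whose cycles step between grid-neighbours, $f$ decomposes $X$ into vertex-disjoint closed lattice walks, and the goal is to show $X$ is $2$-regular, whence connectedness makes it a single cycle $C_n$; realizability then forces $n$ even and $n \neq 6$, and $C_4$ is excluded because it is reducible, leaving exactly the $C_n$ with $n > 4$ up to isomorphism. Two reductions drive the regularity claim. First, $X$ has no vertex of degree $1$: a leaf $v$ with neighbour $w$ satisfies $N[v] \subseteq N[w]$, so folding $v$ onto $w$ is a one-step non-surjection, contradicting irreducibility. Second, I would argue that any connected leaf-free $4$-adjacency image that is not $2$-regular contains a collapsible configuration — for instance a boundary row or column of cells that folds onto the adjacent parallel one, exactly as a $2 \times 3$ grid of lattice points collapses one column onto its neighbour — again producing a one-step non-surjection and contradicting irreducibility.

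The \emph{main obstacle} is this last structural step: converting ``$X$ is not $2$-regular'' into an explicit global one-step reduction. Locally a degree-$\geq 3$ vertex or a $2\times 2$ block suggests a fold, but the fold must be realized as a single continuous non-surjection on all of $X$ simultaneously, and this can fail when the thick part is hemmed in by the rest of the image — precisely the phenomenon that lets cycles survive, since a cycle has no outermost row to push inward. I expect the resolution to require an innermost/outermost-square induction on the number of cells, or a dismantlability-style argument adapted to the grid that certifies each fold is valid and strictly decreases $|X|$; controlling the interaction of such a fold with both the automorphism $f$ and the planar embedding is where the difficulty concentrates, and is the reason the statement remains a conjecture.
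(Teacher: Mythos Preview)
The paper does not prove this statement: it is explicitly presented as a conjecture, supported only by the computational enumeration for $n \le 12$ and by the remark that Conjecture~\ref{planar}, if true, would imply it (since every $4$-adjacency graph is planar). There is therefore no proof in the paper to compare your attempt against.

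Your backward direction is correct and matches the paper's discussion preceding the conjecture. In the forward direction, the parity argument --- showing that a bijective self-map homotopic to $\id_X$ in one step must either fix every point or move every point --- is a genuine and correct observation. But your outline then drops this structure: the proposed final step, that any connected leaf-free $4$-adjacency image which is not $2$-regular must be reducible, makes no use of the automorphism $f$ or of non-rigidity. If that step went through it would establish the strictly stronger claim that \emph{every} irreducible $4$-adjacency image on more than one point is a cycle. That stronger claim is consistent with the paper's tables for $n \le 12$, but it is not what the conjecture asserts, and you yourself flag it as the unresolved obstacle. In short, you have not closed the gap, and neither has the paper.
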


\section{Connected images in $\Z^2$ with 8-adjacency}\label{8adjsection}
In this section we consider finite digital images which can be expressed as subsets of $\Z^2$ with 8-adjacency, in which two different points are adjacent when their coordinates differ by at most 1 in each position.

Enumeration of these images is analogous to the enumeration for $4$-adjacency. We begin with all polyomino-like objects, this time allowing diagonal adjacency. These combinatorial objects are called \emph{polyplets} (in \cite{golo96} they are called \emph{pseudo-polyominoes}), and are also well-studied. There is no known formula for the number of polyplets on $n$ tiles, but this number must grow exponentially since it is greater than the number of polyominoes. 

Recursive enumerations similar to those in the previous section for 4-adjacency produce the following counts:

\begin{center}
\begin{tabular}{r|rrrrrrrrr}
$n$  &	1  &	2  &	3  &	4  &	5  &	6  &	7 
&	8  &	9 \\
\hline
Images with 8-adjacency &	1 &	1 &	2 &	6 &	15
&	51 &	173 &	681 &	2682 \\
Pointed irreducible &	1 &	0 &	0 &	0 &	0 &	1
&	1 &	1 &	1 \\
Irreducible &	1 &	0 &	0 &	0 &	0 &	1 &	1
&	1 &	1 \\
Rigid &	1 &	0 &	0 &	0 &	0 &	0 &	0 &	0
&	0 \\
\end{tabular}
\end{center}

Again the equality of the middle two rows will only hold for small $n$. The 11-point image in Figure \ref{4adjptfig} (b) is pointed irreducible but not irreducible, according to the same arguments cited for the example of Figure \ref{4adjptfig} (a). This second image is discussed in detail in \cite{bs15}

Comparing the tables in this and the previous section we see that, for each $n$, there are more images on $n$ points with 8-adjacency than with 4-adjacency (which is hardly surprising). In fact each image with 4-adjacency is naturally isomorphic to an image with 8-adjacency. In the following, for a set $X\subset \Z^2$, the pair $(X,k)$ represents the digital image $X$ with k-adjacency.
\begin{thm}
Let $X \subset \Z^2$ be a digital image. Then there is some $Y\subset \Z^2$ such that $(X,4)$ is isomorphic to $(Y,8)$. 
\end{thm}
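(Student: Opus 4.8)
The plan is to produce $Y$ by applying a single fixed linear map to $X$ — geometrically, a $45^\circ$ rotation together with a scaling — chosen precisely so that it carries the taxicab geometry underlying 4-adjacency onto the Chebyshev geometry underlying 8-adjacency. Concretely, I would define $\phi:\Z^2\to\Z^2$ by $\phi(x,y)=(x+y,\,x-y)$, note that this does land in $\Z^2$, and set $Y=\phi(X)$. The map $\phi$ is injective (its matrix has determinant $-2\neq 0$), so $\phi$ restricts to a bijection $X\to Y$, and it remains only to check that this bijection is an isomorphism of digital images, i.e.\ a graph isomorphism between $(X,4)$ and $(Y,8)$.

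The engine of the argument is the identity $\max(|a+b|,\,|a-b|)=|a|+|b|$, valid for all integers $a,b$ (check the four sign cases, or use symmetry). Writing $p=(x,y)$, $q=(x',y')$, $\Delta x=x-x'$, and $\Delta y=y-y'$, the difference $\phi(p)-\phi(q)$ equals $(\Delta x+\Delta y,\,\Delta x-\Delta y)$, so the Chebyshev distance between $\phi(p)$ and $\phi(q)$ is $\max(|\Delta x+\Delta y|,|\Delta x-\Delta y|)=|\Delta x|+|\Delta y|$, which is exactly the taxicab distance between $p$ and $q$. Since 4-adjacency in $X$ means taxicab distance $1$ and 8-adjacency in $Y$ means Chebyshev distance $1$, this single equality shows that $p$ and $q$ are 4-adjacent if and only if $\phi(p)$ and $\phi(q)$ are 8-adjacent.

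Consequently $\phi\colon X\to Y$ preserves and reflects adjacency, so it is a graph isomorphism of the adjacency graphs; by the characterization of isomorphism in terms of the adjacency graph given in the introduction, $(X,4)$ is isomorphic to $(Y,8)$, as required. I do not expect a genuine obstacle here: the only point that needs care is the \emph{reverse} implication — that $\phi$ creates no spurious 8-adjacencies in $Y$ beyond the images of the 4-adjacencies of $X$ (for instance, that two points at taxicab distance $2$ in $X$, such as a diagonal pair, are not pushed to Chebyshev distance $1$). The distance identity dispatches this automatically, since it makes the correspondence of distances an exact equality rather than a one-sided inequality, so verifying the identity in all sign cases is the whole of the work.
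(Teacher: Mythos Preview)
Your proposal is correct and uses exactly the same map $\phi(x,y)=(x+y,x-y)$ as the paper, with $Y=\phi(X)$. The only difference is presentational: the paper checks the adjacency correspondence by a direct coordinate argument (and asserts the converse ``similarly''), whereas your distance identity $\max(|a+b|,|a-b|)=|a|+|b|$ packages both directions into a single equality between taxicab and Chebyshev distances, which is a tidy way to make the ``no spurious 8-adjacencies'' direction explicit.
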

\begin{proof}
First we note that it is not true in general that we may take $Y=X$. For example if $X = \{(0,0),(1,0),(0,1),(1,1)\}$, then $(X,4)$ is not isomorphic to $(X,8)$ since the adjacency graph of $(X,8)$ is a complete graph while the adjacency graph of $(X,4)$ is not. 

Let $f:\Z^2 \to \Z^2$ be given by $f(x,y) = (x+y,x-y)$. The set $f(\Z^2)$ consists of pairs $(a,b)$ having the same parity. We claim that $(X,4)$ is isomorphic to $(f(X),8)$. It suffices to show that point $p,q\in X$ are 4-adjacent if and only if $f(p),f(q) \in f(X)$ are 8-adjacent. 4-adjacency of $p$ and $q$ means that their coordinates differ by one in one position, which means that the coordinates of $f(p)$ and $f(q)$ differ by one in both positions, and thus are 8-adjacent. Similarly the converse will hold, and so $f$ is an isomorphism as desired.
\end{proof}

The above theorem shows that the number of 8-images for some given $n$ is always greater or equal to the number of 4-images. In fact the number is strictly greater (typically much greater) whenever $n\ge3$  since there will always exist images with 8-adjacency having sets of 3 mutually adjacent points, while this is impossible using 4-adjacency.

As in the previous section, nonrigid irreducible images seem rare. We conjecture:
\begin{conj}
Let $X$ be a digital image in $\Z^2$ with 8-adjacency. Then $X$ is non-rigid and irreducible if and only if  $X$ is isomorphic to $C_n$ for some $n>4$.
\end{conj}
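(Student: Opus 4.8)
The easy direction is the same as for Conjecture~\ref{4adjconj}: if an $8$-adjacency image $X$ is isomorphic to $C_n$ with $n>4$, then it is irreducible by Boxer's result \cite{boxe05} and non-rigid because the one-step rotation is an automorphism carrying every vertex to an adjacent vertex, hence homotopic to the identity. So the real content is the converse, and I would assume throughout that $X\subset\Z^2$ (with $8$-adjacency) is irreducible and non-rigid and try to prove $X\cong C_n$ for some $n>4$. Write $N[x]$ for the set consisting of $x$ together with its $8$-neighbors in $X$.

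The backbone of the argument is a reduction of non-rigidity to a purely local automorphism. First, a continuous bijection of a finite image is automatically an isomorphism: injectivity forbids any edge from collapsing, so edges map to edges, the induced map on the finite edge set is an injection hence a bijection, and the inverse is then edge-preserving as well. Second, if $X$ is irreducible then every map homotopic to the identity in one step is surjective, by the characterization of reducibility (Lemma~2.9 of \cite{hmps15}); being a self-surjection of a finite set, it is a bijection, hence a graph automorphism. Now take any homotopy $\id=H_0,H_1,\dots,H_k=g$ with $g\neq\id$ and consecutive $H_t$ homotopic in one step. By induction each $H_t$ is an automorphism: composing a one-step homotopy with the continuous map $H_t^{-1}$ shows $H_t^{-1}\circ H_{t+1}$ is homotopic to $\id$ in one step, hence an automorphism, so $H_{t+1}=H_t\circ(H_t^{-1}\circ H_{t+1})$ is one too. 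Choosing $t$ minimal with $H_t\neq\id$, the map $\phi:=H_t$ is a nontrivial automorphism homotopic in one step to $H_{t-1}=\id$; equivalently $\phi(x)\in N[x]$ for every $x$. Thus it suffices to show that an irreducible $8$-image admitting a nontrivial automorphism $\phi$ with $\phi(x)\in N[x]$ for all $x$ must be a cycle $C_n$ with $n>4$.

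The plan for this geometric step is to turn irreducibility into a thinness statement and then use $\phi$ to force $2$-regularity. Irreducibility forbids any \emph{foldable} vertex: if adjacent $v\adj w$ satisfy $N[v]\subseteq N[w]$, then the map fixing everything and sending $v\mapsto w$ is a non-surjection homotopic to the identity in one step, contradicting irreducibility. In particular a degree-$1$ vertex is foldable onto its unique neighbor, so $X$ has minimum degree at least $2$; a connected $2$-regular graph is a single cycle $C_n$, and since $C_3$ and $C_4$ are reducible (Boxer), irreducibility would leave exactly $n>4$. The whole difficulty is therefore concentrated in showing that the maximum degree is also $2$, i.e.\ that no vertex $v$ can have three or more $8$-neighbors. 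Here I would attempt a local case analysis: given such a $v$, either the neighbor pattern already exhibits a foldable vertex (completing the contradiction), or the requirement that $\phi$ be a \emph{bijection} moving every point to an adjacent point cannot be met consistently in the branch, again a contradiction.

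The main obstacle is exactly this last classification, and it is genuinely harder than in the $4$-adjacency case. There, Conjecture~\ref{planar} together with the planarity of all $4$-adjacency graphs would finish the argument; but $8$-adjacency graphs are routinely non-planar, so that shortcut is unavailable. The delicate feature of $8$-adjacency is its abundance of triangles (any $L$-shaped triple of cells is a $3$-clique), so ``thin'' cannot be taken to mean $2$-regular a priori, and a thick local configuration need not produce a fold \emph{at the thick vertex itself}, only somewhere in the image. I expect the crux to be a uniform argument that every triangular or branching configuration, in the presence of the neighbor-preserving bijection $\phi$, forces a fold elsewhere or an incompatibility in $\phi$; carrying this out in general, rather than relying on the finite enumeration that already verifies the statement for $n\le 9$, is the part that would require real new work.
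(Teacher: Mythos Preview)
There is nothing to compare against: in the paper this statement is a \emph{conjecture}, offered on the basis of the computational enumeration for $n\le 9$, and no proof is given or claimed. So your write-up is not an alternative proof but an attempt at an open problem, and you correctly flag it as incomplete.

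That said, the reduction you carry out in the first two paragraphs is sound and worth recording. The easy direction is fine. For the hard direction, your argument that on an irreducible finite image any map homotopic to the identity is an automorphism, and hence that non-rigidity yields a nontrivial automorphism $\phi$ with $\phi(x)\in N[x]$ for all $x$, is correct: the key steps (continuous bijections of finite images are isomorphisms; Lemma~2.9 of \cite{hmps15} forces surjectivity at each stage; the inductive transfer via $H_t^{-1}\circ H_{t+1}$) all check out. Likewise the observation that irreducibility alone forces minimum degree $\ge 2$ via the fold $v\mapsto w$ is standard and correct.

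The genuine gap is exactly where you say it is: you have not shown, and the paper does not show, that such an $X$ must have maximum degree $2$. Your proposed local case analysis is only a plan, not an argument, and there is no reason to expect it to go through cleanly---the abstract enumeration in Section~\ref{abstractsection} already exhibits three irreducible non-rigid images on $8$ and $9$ points that are \emph{not} cycles, so the cycle conclusion cannot follow from the graph-theoretic reduction alone; the $8$-adjacency geometry of $\Z^2$ must enter in an essential way. Until that geometric input is made precise, the conjecture remains open.
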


\end{document}